\newtheorem{proposition}{Proposition}[section]
\newtheorem{theorem}{Theorem}[section]
\newtheorem{lemma}{Lemma}[section]
\title{The Minimal (Edge) Connectivity of Some Graphs of Finite Groups}	 
\author {Siddharth Malviy\footnote{First author, Email:  malviysiddharth@gmail.com} ~and Vipul Kakkar\footnote{Corresponding author, Email: vplkakkar@gmail.com }\\ Department of Mathematics \\
	Central University of Rajasthan \\Ajmer, India}
\date{}
\begin{document}
	\maketitle
	\noindent \textbf{{Abstract.}}
	In this paper, we classify all the finite groups $G$ such that the commuting graph $\Gamma_C(G)$, order-sum graph $\Gamma_{OS}(G)$ and non-inverse graph $\Gamma_{NI}(G)$ are minimally edge connected graphs. We also classify all the finite groups $G$ for that, these graphs are minimally connected. We also classify some groups for that the co-prime graph $\Gamma_{CP}(G)$ has minimal edge connectedness. In final part, we classify all the finite groups $G$ for that co-prime graph $\Gamma_{CP}(G)$ is minimally connected.\\
	
	\noindent \textbf{{Keywords.}}  Edge connectivity, vertex connectivity, finite groups\\
	
	\noindent \textbf{2020 MSC.} 05C25, 05C40
	
	\section{Introduction}
	There are several graphs that are connected to finite groups. These graphs provide further insight between the algebraic characteristics of groups and the graph theoretic characteristics of related graphs. New constructions and analyses of graphs with important features can be obtained by applying algebraic approaches. \\
	The commuting graph ${\Gamma_C(G)}$ of a group $G$ is the simple undirected graph with group elements as a vertex set and two elements $x$ and $y$ are adjacent if and only if $xy=yx$ in $G$. The co-prime graph ${\Gamma_{CP}(G)}$ of a group $G$ is the simple undirected graph with group elements as a vertex set and two elements $x$ and $y$ are adjacent if and only if the greatest common divisor of order of elements $x$ and $y$ is $1$ in $G$. The order-sum graph ${\Gamma_{OS}(G)}$ of a group $G$ is the simple undirected graph with group elements as a vertex set and two elements $x$ and $y$ are adjacent if and only if $ o(x)+o(y) > |G|$ in $G$. 	The non-inverse graph ${\Gamma_{NI}(G)}$ of a group $G$ is the simple undirected graph with group elements as a vertex set and two elements $x$ and $y$ are adjacent if and only if $x$ is not the inverse of $y$ in $G$. \\ 
	A finite group $G$ is a non-cyclic group of prime exponent if and only if the power graph $P(G)$ is both non-complete and minimally edge connected (see \cite{prime panda}). A finite group $G$ was also shown to be an elementary abelian $2$-group of rank at least $2$ if and only if $P(G)$ is minimally connected and non-complete.	
	The finite nilpotent group $G$ for which the minimum degree and the vertex connectivity of $P(G)$ are equal was classified by Panda and Krishna \cite{power graph paper}. The	groups have been described by Kumar et al. \cite{enh power graph jk} in a way that provides the vertex connectivity and minimum degree of their corresponding enhanced power graphs are equivalent. For finite non-abelian groups with an exponent-order element, Kumar et al. \cite{superpower graph} investigated Hamiltonian-like features of the superpower graph and established strict bounds for the vertex connectivity. Minimal edge connectivity of enhancred power graph and order supergraph of power graph for finite groups have been explored in \cite{main paper}.\\
	This motivates the present paper, which considers the problem of classification of finite groups such that their commuting graphs, order-sum graphs and non-inverse graphs, respectively, are minimally edge connected. In addition to that, we classify all the finite groups for that these graphs are minimally connected. We also classify some groups for that co-prime graph $\Gamma_{CP}(G)$ has minimal edge connectedness. In final part, we classify all the finite groups $G$ for that co-prime graph $\Gamma_{CP}(G)$ is minimally connected. Throughout this paper, $G$ is a finite group and $e$ is the identity element of $G$.
	
	\section{Preliminiaries}
	We now briefly go over some of the terminology and notations we use in the paper. The collection of vertices and edges of a graph $\Gamma$ are denoted by $V (\Gamma)$ and $E(\Gamma)$, respectively. A graph $\Gamma$ is a pair $\Gamma = (V, E)$. If there is an edge $\{uv\}$ between $u$ and $v$, then two distinct vertices $u$ and $v$ are adjacent, indicated by $u \sim v$. When two edges share endpoints, they are said to be multiple edges. If $u = v$, an edge $\{uv\}$ is referred to be a loop. Graphs without loops or many edges are called simple graphs. The neighborhood of $x$ is the set $N(x)$ of all the vertices that are adjacent to the vertex $x$ in $\Gamma$. We additionally indicate $N[x] = N(x) \cup \{x\}$. A graph $\Gamma'$\ such that $V (\Gamma') \subseteq V(\Gamma)$ and $E(\Gamma') \subseteq E(\Gamma)$ is called a subgraph of the graph $\Gamma$. A graph $\Gamma$ is considered complete if any two different vertices are adjacent to each other. From vertex $u$ to vertex $w$, a walk $\lambda$ in $\Gamma$ is a sequence of vertices $u = v_1, v_2, \hdots, v_m = w (m > 1)$ such that $v_i \sim v_{i+1}$ for every $i \in {1, 2, \hdots, m-1}$). If no vertex is repeated, a walk is considered a path. If every pair of vertices in a graph $\Gamma$ has a path in $\Gamma$, then the graph is connected, otherwise $\Gamma$ is not connected.

	The degree of $u$ is the number of vertices that are adjacent to it; it is represented by the notation $deg(u)$. If every vertex in a graph has the same degree, the graph is said to be regular. If a vertex $u$ is adjacent to every other vertex in a graph $\Gamma$, it is considered a dominant vertex. The diameter of $\Gamma$, represented by $diam(\Gamma)$, is the maximum distance between two vertices of a connected graph $\Gamma$. A tree is a connected graph with no cycle. 
	
	The graph $\Gamma$ has a minimum degree $\delta(\Gamma)$, which is the smallest degree of all its vertices. In a connected graph $\Gamma$, a vertex (edge) cut-set is a set $S$ of vertices (edges) such that, after deleting the set $S$, the remaining subgraph $\Gamma-S$ is disconnected or has just one vertex. The smallest size of a vertex (edge) cut-set is the vertex connectivity $\kappa(\Gamma)$(edge connectivity $\kappa'(\Gamma))$ of a connected graph $\Gamma$. If, for any edge $\epsilon$ of graph $\Gamma$, $\kappa'(\Gamma- \epsilon) = \kappa'(\Gamma)-1$, then the graph is considered minimally edge connected. A graph is considered minimally connected if $\kappa(\Gamma-\epsilon)= \kappa(\Gamma)-1$ for any edge $\epsilon$ of graph $\Gamma$. There are two statements that establish relationships among edge connectivity, vertex connectivity, and the minimum degree of a graph.

	\begin{proposition}
		(\cite[Theorem 6]{diameter 2}). If the diameter of any graph is at most 2, then its edge connectivity and minimum
		degree are equal.
	\end{proposition}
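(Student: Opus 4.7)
My strategy rests on the universal bound $\kappa'(\Gamma) \leq \delta(\Gamma)$, which holds for every graph because removing the $\delta(\Gamma)$ edges incident to a vertex of minimum degree isolates it; no diameter hypothesis is needed for this direction. The content of the proposition therefore lies in the reverse inequality $\kappa'(\Gamma) \geq \delta(\Gamma)$ under the assumption $\mathrm{diam}(\Gamma) \leq 2$, which I would establish by contradiction.

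Assume an edge cut $F = [A,B]$ with $A,B$ nonempty, $V(\Gamma) = A \sqcup B$, and $|F| < \delta(\Gamma)$. The crucial use of the diameter hypothesis is to rule out a symmetric ``pure vertex on each side'' configuration: if there were $a \in A$ with $N(a) \subseteq A$ and $b \in B$ with $N(b) \subseteq B$, then any $a$-$b$ walk of length at most two would force a common neighbor in $A \cap B = \emptyset$, which is impossible. After swapping $A$ and $B$ if needed, I may therefore assume that every $b \in B$ has at least one neighbor in $A$---either because $B$ contains no pure vertex at all (so each $b$ directly has a cross neighbor), or because $A$ has a pure vertex $a$, in which case the diameter-$2$ hypothesis produces, for each $b \in B$, a common neighbor of $a$ and $b$ that must lie in $N(a) \subseteq A$.

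The remainder is a double count from the $B$-side. I would rewrite
\[
|F| \;=\; \sum_{b \in B} |N(b) \cap A|,
\]
and bound each summand from below by $|N(b) \cap A| \geq \deg(b) - |B \setminus \{b\}| \geq \delta(\Gamma) - |B| + 1$. This yields $|F| \geq |B|\bigl(\delta(\Gamma) - |B| + 1\bigr)$ when $|B| \leq \delta(\Gamma)$ and $|F| \geq |B| > \delta(\Gamma)$ when $|B| > \delta(\Gamma)$. A short concavity check shows that the quadratic $t \mapsto t(\delta(\Gamma) - t + 1)$ has value at least $\delta(\Gamma)$ on every integer $t \in \{1, 2, \ldots, \delta(\Gamma)\}$ (its minima on that interval occur at the two endpoints and equal $\delta(\Gamma)$). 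In every case one obtains $|F| \geq \delta(\Gamma)$, contradicting $|F| < \delta(\Gamma)$.

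The hard part will be isolating the exact role of the diameter hypothesis. The counting step at the end is available in any graph once the ``every $b \in B$ has a neighbor in $A$'' property holds, so the whole argument really hinges on the opening observation that pure vertices cannot coexist on both sides of a cut. The classical example of two copies of $K_n$ glued along a single bridge edge, which has diameter $3$, $\kappa' = 1$, and $\delta = n-1$, shows that without $\mathrm{diam}(\Gamma) \leq 2$ this asymmetry can fail and the conclusion is no longer true; thus the subtlety of the proof is to install this asymmetry correctly from the diameter hypothesis, after which the remainder reduces to a routine degree count.
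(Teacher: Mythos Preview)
The paper does not prove this proposition; it is quoted as a preliminary result with a bare citation to Plesn\'{i}k and no argument is supplied. Your proof is correct and is essentially the classical Plesn\'{i}k argument: the diameter-$2$ hypothesis rules out having pure (non-crossing) vertices on both sides of an edge cut, so one side $B$ has every vertex incident to a cut edge, and then the degree count $|F|=\sum_{b\in B}|N(b)\cap A|$ together with the quadratic bound forces $|F|\ge\delta(\Gamma)$. There is nothing further in the paper to compare against.
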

	
	\begin{proposition}
		(\cite[Theorem 4.1.9]{west book}) If $\Gamma$ is a simple graph, then $\kappa(\Gamma) \leq \kappa'(\Gamma) \leq \delta(\Gamma)$.
	\end{proposition}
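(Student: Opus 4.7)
The plan is to prove the two inequalities $\kappa(\Gamma) \leq \kappa'(\Gamma)$ and $\kappa'(\Gamma) \leq \delta(\Gamma)$ separately, since they follow from very different arguments. (This is Whitney's classical inequality; the paper cites West, so my goal is to sketch the standard textbook argument.)

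The right-hand inequality $\kappa'(\Gamma) \leq \delta(\Gamma)$ is the easy direction, and I would dispatch it first. If $\Gamma$ has at most one vertex there is nothing to prove; otherwise, let $v \in V(\Gamma)$ with $\deg(v) = \delta(\Gamma)$, and consider the set $E_v$ of edges incident to $v$. Deleting $E_v$ isolates $v$ from the remaining vertices, so $E_v$ is an edge cut-set of size $\delta(\Gamma)$, yielding $\kappa'(\Gamma) \leq |E_v| = \delta(\Gamma)$.

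For the left-hand inequality $\kappa(\Gamma) \leq \kappa'(\Gamma)$, let $F$ be a minimum edge cut with $|F| = k = \kappa'(\Gamma)$, and let $[S, \bar{S}]$ be the vertex bipartition it induces. I would split into two cases. \textbf{Case A:} there exist $u \in S$ and $v \in \bar{S}$ with $\{u,v\} \notin E(\Gamma)$. Construct a vertex set $W$ by selecting, for each edge $\{x,y\} \in F$ with $x \in S$ and $y \in \bar{S}$, the endpoint $y$ if $x = u$, the endpoint $x$ if $y = v$, and (otherwise) either endpoint, always avoiding $u$ and $v$. Then $|W| \leq |F| = k$ and $u,v \notin W$; moreover any $u$-$v$ walk in $\Gamma$ must cross $F$, and by construction every $F$-edge has at least one endpoint in $W$, so $\Gamma - W$ separates $u$ from $v$, giving $\kappa(\Gamma) \leq k$. \textbf{Case B:} every vertex of $S$ is adjacent to every vertex of $\bar{S}$. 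Then $k = |F| = |S|\cdot|\bar{S}|$, and since $|S|\cdot|\bar{S}| \geq |S| + |\bar{S}| - 1 = n - 1$ (elementary check, splitting on whether $\min(|S|,|\bar{S}|) = 1$ or $\geq 2$), we get $\kappa(\Gamma) \leq n - 1 \leq k$.

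I expect the main obstacle to be the bookkeeping in Case A: one has to verify that the chosen $W$ simultaneously (i) has size at most $k$, (ii) omits both $u$ and $v$, and (iii) meets every $F$-edge. The non-adjacency of $u$ and $v$ is precisely what makes the three conditions mutually compatible — without it, an $F$-edge equal to $\{u,v\}$ would force one of $u,v$ into $W$. Case B then handles exactly the degenerate situation where no such non-adjacent pair exists, and there the bound comes for free from the inequality $ab \geq a+b-1$ for positive integers $a,b$.
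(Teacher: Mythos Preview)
Your argument is correct and is precisely Whitney's classical proof as presented in West's textbook. Note, however, that the paper does not supply its own proof of this proposition: it is stated as a quoted result with a citation to \cite[Theorem 4.1.9]{west book} and left unproved, so there is no in-paper argument to compare against. Your write-up faithfully reconstructs the cited source, including the case split on whether the bipartition $[S,\bar S]$ induced by a minimum edge cut admits a non-adjacent cross pair, and the use of $ab \geq a+b-1$ in the complete-bipartite case.
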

	
	
	Throughout this paper, $\mathbb{Z}_n$ denotes the cyclic group of order $n$. The centralizer of an element $x$ in the group $G$ is denoted by $C_G(x)$. For a positive integer $n$, $\phi(n)$ denotes the Euler’s totient function of $n$. The order of an element $x \in G$ is denoted by $o(x)$. If $G$ contains an element whose order is equal to the exponent $exp(G)$ of $G$, then $G$ is called the group of full exponent. If $|G| = p^n$ for some prime $p$, then $G$ is called a $p$-group. An EPPO-group is a finite group such that the order of each element is a power of a prime number. It should be noted that all $p$-groups are EPPO-groups. The converse need not be true, though. For instance, the dihedral group $D_{2n}$, where $n = p^\alpha$ for some odd prime $p$, and the symmetric group $S_3$ are EPPO-groups but not $p$-groups.

	
	\section{Minimal (Edge) Connectivity}
	
	The main results of the mansuscript are presented in this section. We characterize all the finite groups $G$ such that the order-sum graph $\Gamma_{OS}(G)$ (Theorem \ref{thm order sum graph  minimal edge}), the non-inverse graph $\Gamma_{NI}(G)$ (Theorem \ref{thm non inverse minimal edge}) and the commuting graph $\Gamma_C(G)$ (Theorem \ref{thm commuting minimal edge}) are minimally edge connected graphs. We also classify all the finite groups $G$ for that, these graphs are minimally connected ( Theorem \ref{cor order-sum minimal connected}, Theorem \ref{cor non-inverse minimal connected}, Theorem \ref{thm commuting minimal connected}) respectively. We also classify some groups for that co-prime graph $\Gamma_{CP}(G)$ has minimal edge connectivity. In final part, we classify all the finite groups for that co-prime graph $\Gamma_{CP}(G)$ is minimal connected.

	\begin{lemma} \label{lemma 3.1}
		If a graph is a complete graph or a star graph, then it is minimally edge connected and minimally connected.
	\end{lemma}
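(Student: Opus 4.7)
The plan is to verify the definitions of minimally edge connected and minimally connected directly for the two classes, by computing $\kappa$, $\kappa'$, and $\delta$ for $\Gamma$ and for $\Gamma - \epsilon$ and checking that both connectivity parameters drop by exactly one after any edge deletion.

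First I would dispose of the complete graph $K_n$. The identities $\kappa(K_n) = \kappa'(K_n) = \delta(K_n) = n-1$ follow from Propositions 2.1 and 2.2 together with the obvious $\delta(K_n) = n-1$. For an arbitrary edge $\epsilon = uv$, the graph $K_n - \epsilon$ still has diameter at most $2$ (any of the other $n-2$ vertices is a common neighbour of $u$ and $v$) and minimum degree $n-2$, so Proposition 2.1 gives $\kappa'(K_n - \epsilon) = n - 2 = \kappa'(K_n)-1$. For the vertex connectivity, the set of the remaining $n-2$ vertices separates $u$ from $v$, so $\kappa(K_n - \epsilon) \leq n-2$; combined with Proposition 2.2 applied to $K_n - \epsilon$, this forces equality and confirms the required drop by one.

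Next I would handle the star graph $K_{1,n}$ with centre $c$ and $n$ leaves. Here $\delta(K_{1,n}) = 1$, so Proposition 2.2 together with connectedness gives $\kappa(K_{1,n}) = \kappa'(K_{1,n}) = 1$. Any edge $\epsilon$ is necessarily of the form $cv$ for some leaf $v$; deleting it isolates $v$ and leaves a disconnected graph, whence $\kappa(K_{1,n} - \epsilon) = \kappa'(K_{1,n} - \epsilon) = 0 = 1 - 1$, again the equality demanded by both definitions.

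The argument is almost entirely mechanical. The only mild obstacle worth flagging is the conventional interpretation of $\kappa$ and $\kappa'$ on disconnected graphs as being equal to zero, which is what makes the numerical equality work in the star case; once that convention is in hand, both halves of the lemma follow directly from Propositions 2.1 and 2.2 and the definitions given in the preliminaries.
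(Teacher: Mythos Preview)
The paper states Lemma~3.1 without proof, so there is nothing to compare against; your direct verification is a reasonable way to fill the gap and the star case is handled cleanly.

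There is, however, one genuine hole in the complete-graph case. You correctly obtain $\kappa'(K_n-\epsilon)=n-2$ via Proposition~2.1, and you exhibit a separating set of size $n-2$ to get $\kappa(K_n-\epsilon)\le n-2$. But the sentence ``combined with Proposition~2.2 applied to $K_n-\epsilon$, this forces equality'' does not do what you claim: Proposition~2.2 only gives $\kappa\le\kappa'\le\delta$, which is another \emph{upper} bound $\kappa(K_n-\epsilon)\le n-2$, not the lower bound you need. To conclude $\kappa(K_n-\epsilon)=n-2$ (and hence that the drop is exactly one, as the paper's definition of ``minimally connected'' requires), you must also argue $\kappa(K_n-\epsilon)\ge n-2$. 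Two quick fixes: either invoke the general fact that deleting a single edge lowers vertex connectivity by at most one (if $S$ is a minimum cut of $\Gamma-\epsilon$ that is not a cut of $\Gamma$, then $\epsilon$ is a bridge of $\Gamma-S$ and adding one of its endpoints to $S$ yields a cut of $\Gamma$), or observe directly that after deleting any $n-3$ vertices from $K_n-\epsilon$ the three remaining vertices miss at most the single edge $\epsilon$ and hence still induce a connected graph.
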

	
	\begin{lemma} \label{lemma 3.2}
		The commuting graph $\Gamma_C(G)$ of a group $G$ is complete if and only if $G$ is abelian.
	\end{lemma}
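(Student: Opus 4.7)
The statement is an immediate consequence of unwinding the two definitions, so the plan is simply to verify both implications directly, taking a little care with the convention that $\Gamma_C(G)$ is a simple graph and therefore has no loops.

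For the \emph{if} direction, assume $G$ is abelian. Pick any two distinct vertices $x, y \in V(\Gamma_C(G)) = G$. Since $G$ is abelian we have $xy = yx$, which by definition of the commuting graph means $x \sim y$. Hence every pair of distinct vertices is adjacent, so $\Gamma_C(G)$ is complete.

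For the \emph{only if} direction, assume $\Gamma_C(G)$ is complete. To show $G$ is abelian, take arbitrary $x, y \in G$. If $x = y$ then trivially $xy = yx$. If $x \ne y$, then completeness gives $x \sim y$, which by definition of $\Gamma_C(G)$ means $xy = yx$. In either case $x$ and $y$ commute, so $G$ is abelian.

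There is no real obstacle here; the only thing to be mindful of is that the adjacency rule in the definition of $\Gamma_C(G)$ only applies to distinct vertices (since simple graphs have no loops), which is why the reverse direction needs the brief separate remark for $x = y$. No auxiliary results from the excerpt are required.
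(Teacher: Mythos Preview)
Your proof is correct and complete. The paper actually states this lemma without proof, treating it as an immediate consequence of the definitions, so your direct verification of both implications is precisely the argument that is being left implicit.
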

	
	\begin{lemma} \label{lemma completeness coprimes}
		The co-prime graph $\Gamma_{CP}(G)$ of a group $G$ is complete if and only if order $|G|$ of group $G \leq 2$ .
	\end{lemma}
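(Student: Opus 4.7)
The plan is to prove the two directions of the equivalence separately, with the reverse direction being essentially immediate and the forward direction handled by a short case analysis via the contrapositive.

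For the easy direction, I would observe that if $|G| = 1$ the graph has a single vertex and is vacuously complete, while if $|G| = 2$ the group is $\{e, a\}$ with $o(a) = 2$, and since $\gcd(o(e), o(a)) = \gcd(1, 2) = 1$, the unique pair of vertices is adjacent, so the graph is $K_2$.

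For the forward direction I would argue the contrapositive: assume $|G| \geq 3$ and exhibit two distinct, non-adjacent vertices. The key remark is that two distinct elements $x, y \in G$ fail to be adjacent in $\Gamma_{CP}(G)$ whenever $o(x)$ and $o(y)$ share a common prime divisor, and in particular whenever they have the same order greater than $1$. I would then split into two cases. In the first case, $G$ contains some element $a$ with $o(a) \geq 3$; then $a \neq a^{-1}$ and $o(a) = o(a^{-1}) \geq 3$, so $a$ and $a^{-1}$ are distinct and non-adjacent. In the second case, every non-identity element of $G$ is an involution, which forces $G$ to be an elementary abelian $2$-group; since $|G| \geq 3$ and $|G|$ is a power of $2$, we have $|G| \geq 4$, so there are at least two distinct elements of order $2$, and any such pair has $\gcd(2, 2) = 2 \neq 1$ and is therefore non-adjacent.

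The only point that requires a little care is verifying that the case split is exhaustive and that the involution case actually produces two distinct involutions rather than collapsing back to $|G| \leq 2$; this is exactly where the hypothesis $|G| \geq 3$ combined with the structure of elementary abelian $2$-groups is used. I do not anticipate a real obstacle here, since the argument is elementary and does not rely on any deeper group-theoretic machinery beyond the elementary fact that elements of order at least $3$ are distinct from their inverses.
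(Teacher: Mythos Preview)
Your argument is correct. The case split is exhaustive, and in the involution case the observation that $|G|\geq 3$ together with $|G|$ being a power of $2$ forces $|G|\geq 4$ indeed supplies two distinct involutions, so the potential worry you flag does not materialize.

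As for comparison: the paper does not actually prove this lemma. It is listed among a block of preliminary lemmas (Lemmas~3.1--3.5) that are stated without proof as elementary or known facts. Your short direct argument is exactly the kind of verification one would expect for such a statement, and nothing in the paper suggests a different intended route.
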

	
	\begin{lemma} \label{lemma 3.4} \cite[Theorem 8]{order sum graph}
		The order-sum graph $\Gamma_{OS}(G)$ of a group $G$ is complete if and only if $G$ is cyclic group of prime order.
	\end{lemma}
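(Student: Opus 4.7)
The plan is to prove both directions by directly analyzing the adjacency condition $o(x)+o(y)>|G|$, with the identity element doing most of the work in the nontrivial direction.

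For the easier (backward) direction, I would assume $G$ is cyclic of prime order $p$, so $|G|=p$, $o(e)=1$, and every non-identity element has order $p$. Then for any two distinct vertices $x,y$, the smaller of $o(x)+o(y)$ is $1+p>p=|G|$, and the larger is $2p>p$, so every pair is adjacent and $\Gamma_{OS}(G)$ is complete.

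For the forward direction I would argue in two steps. First, focus on the identity vertex $e$ with $o(e)=1$. Since $\Gamma_{OS}(G)$ is complete, $e$ is adjacent to every other vertex $x$, which forces $1+o(x)>|G|$, i.e.\ $o(x)\geq |G|$. Combined with Lagrange's theorem, this gives $o(x)=|G|$ for every $x\neq e$. In particular $G$ contains an element of order $|G|$, so $G$ is cyclic. Second, set $n=|G|$ and observe that a cyclic group of order $n$ has an element of order $d$ for every divisor $d$ of $n$; since we have shown every non-identity element has order $n$, the only divisors of $n$ are $1$ and $n$, so $n$ is prime.

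There is no real obstacle — the argument is essentially forced by looking at $e$. The only mild subtlety worth flagging is the degenerate case $|G|=1$, which I would dismiss by the convention that the statement is considered for groups with at least two elements (or by noting that the empty adjacency relation on a single vertex trivially matches neither side meaningfully), consistent with the cited source \cite{order sum graph}.
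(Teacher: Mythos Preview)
Your argument is correct: looking at adjacency with the identity forces $o(x)=|G|$ for every $x\neq e$, hence $G$ is cyclic, and then the divisor structure of a cyclic group forces $|G|$ to be prime. The backward direction is an immediate check, and your remark on the trivial group is the appropriate caveat.

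Note, however, that the present paper does not supply its own proof of this lemma at all; it is simply quoted from \cite[Theorem~8]{order sum graph}. So there is no in-paper argument to compare against. Your proof is the natural one and matches what one finds in the cited source.
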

	
	\begin{lemma} \label{lemma 3.5}
		The non-inverse graph $\Gamma_{NI}(G)$ of a group $G$ is complete if and only if all the elements in $G$ is self-inverse.
	\end{lemma}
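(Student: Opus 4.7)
The statement is an elementary equivalence that follows almost directly from unpacking the adjacency rule of $\Gamma_{NI}(G)$, so my plan is to prove it by handling the two implications separately, each by a short direct argument.

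For the reverse implication, I would assume every element of $G$ is self-inverse, i.e., $g^{-1}=g$ for all $g \in G$. Then for any two distinct vertices $x,y \in G$, I have $y^{-1} = y \neq x$, so $x$ is not the inverse of $y$. By the definition of $\Gamma_{NI}(G)$, this makes $x \sim y$, and since the pair was arbitrary, $\Gamma_{NI}(G)$ is complete.

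For the forward implication, I would argue by contrapositive. Suppose $G$ contains some element $y$ that is not self-inverse, i.e., $y^{-1} \neq y$. Then $y$ and $y^{-1}$ are two distinct vertices of $\Gamma_{NI}(G)$, and by definition they are non-adjacent (each is literally the inverse of the other). This single non-edge prevents $\Gamma_{NI}(G)$ from being complete, so completeness forces every element to satisfy $g=g^{-1}$.

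There is no real obstacle here; the main thing to be careful about is the convention that $\Gamma_{NI}(G)$ is a simple graph, so the adjacency rule is only applied to distinct vertices and no self-loop issue arises from the identity (or any other involution) being its own inverse. Once this is noted, both directions are immediate from the definition.
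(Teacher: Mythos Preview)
Your argument is correct: both implications follow immediately from the definition of $\Gamma_{NI}(G)$, and you have handled the only subtlety (that the adjacency rule applies to distinct vertices, so involutions cause no self-loop issue) explicitly. The paper states this lemma without proof, treating it as elementary, so your write-up simply fills in the omitted details and there is nothing further to compare.
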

	
	\begin{lemma}
		\cite[Theorem 3.6]{non-inverse graph} For a non-inverse graph $\Gamma_{NI}(G)$ associated with a group $G$, the vertex-connectivity $\kappa(\Gamma_{NI})$ and edge-connectivity $\kappa(\Gamma_{NI})$ are always equal.
	\end{lemma}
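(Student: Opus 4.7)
The plan is to exploit the fact that the non-edges of $\Gamma_{NI}(G)$ form a matching. Two distinct vertices $x,y$ fail to be adjacent in $\Gamma_{NI}(G)$ precisely when $y=x^{-1}$, so the complement of $\Gamma_{NI}(G)$ consists of the edges $\{x,x^{-1}\}$ with $x\neq x^{-1}$, which is a matching. Writing $n=|G|$, the graph $\Gamma_{NI}(G)$ is therefore $K_n$ with a (possibly empty) matching removed, and everything will follow from this observation.

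First I would dispose of the trivial case in which every element of $G$ is self-inverse: here no edges are removed from $K_n$, so $\Gamma_{NI}(G)=K_n$ and $\kappa=\kappa'=n-1$. From now on I assume some $a\in G$ satisfies $a\neq a^{-1}$, which forces $n\geq 3$. Self-inverse vertices then have degree $n-1$ and non-self-inverse vertices have degree $n-2$, so $\delta(\Gamma_{NI})=n-2$. I would next observe that $\mathrm{diam}(\Gamma_{NI})\leq 2$: the only non-adjacent pairs have the form $\{x,x^{-1}\}$, and any third vertex $z\in G\setminus\{x,x^{-1}\}$ is a common neighbour, since $z\neq x^{-1}$ and $z\neq (x^{-1})^{-1}=x$. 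Applying the diameter-$2$ proposition (Proposition 2.1) gives $\kappa'(\Gamma_{NI})=\delta(\Gamma_{NI})=n-2$.

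It remains to show $\kappa(\Gamma_{NI})=n-2$. The upper bound $\kappa\leq \kappa'=n-2$ is immediate from Proposition 2.2. For the lower bound I would argue that no vertex cut $S$ can satisfy $|S|<n-2$: if $|V\setminus S|\geq 3$ and $u,v\in V\setminus S$ are non-adjacent in $\Gamma_{NI}$, then $v=u^{-1}$, and by the common-neighbour observation above any third vertex of $V\setminus S$ supplies a $u$-$v$ path of length $2$; hence $V\setminus S$ would induce a connected subgraph, contradicting that $S$ is a cut. Therefore every vertex cut has $|S|\geq n-2$, and we conclude $\kappa=n-2=\kappa'$.

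No step here is really an obstacle: once one spots that $\Gamma_{NI}(G)$ is $K_n$ minus a matching, both the diameter bound and the exact value $n-2$ of $\kappa$ follow from the same elementary ``pick any third vertex'' trick, and the only care needed is to split off the all-self-inverse case so that $n\geq 3$ is available in the main argument.
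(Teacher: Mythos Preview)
Your argument is correct. The key observation that $\Gamma_{NI}(G)$ is $K_n$ with a perfect-or-partial matching deleted is exactly right, and from it both the diameter bound and the exact computation $\kappa=\kappa'=n-2$ (or $n-1$ in the all-self-inverse case) follow cleanly; the ``pick any third vertex'' step in your lower bound for $\kappa$ is valid because any $w\notin\{u,u^{-1}\}$ is adjacent to both $u$ and $u^{-1}$.

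As for comparison: the paper does not supply its own proof of this lemma but merely cites it from \cite{non-inverse graph}, so there is no in-paper argument to set yours against. Your route via Propositions~2.1 and~2.2 plus the explicit structure of $\Gamma_{NI}(G)$ as a complete graph minus a matching is a natural and self-contained way to recover the result, and in fact yields slightly more than the bare statement, namely the exact common value $\kappa=\kappa'=\delta=n-2$ whenever $G$ has a non-involution.
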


\begin{proposition} \label{minimal edge pro}
	\cite[Theorem 2.1]{main paper}	Let $\Gamma$ be a non-complete graph with a dominating vertex $x$. Then $\Gamma$ is minimally edge connected if and only if $x$ is the only dominating vertex in $\Gamma$ and $\Gamma-\{x\}$ is a regular graph.
\end{proposition}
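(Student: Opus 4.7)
The strategy would be to exploit the dominating vertex $x$ twice. First, it forces $\Gamma$ to have diameter at most $2$, so by the diameter proposition above we have $\kappa'(\Gamma)=\delta(\Gamma)$; second, this equality will typically survive the deletion of a single edge. Together with the elementary lower bound $\kappa'(\Gamma-\epsilon)\ge\kappa'(\Gamma)-1$ (since appending $\epsilon$ to any minimum cut of $\Gamma-\epsilon$ produces a cut of $\Gamma$), verifying the condition $\kappa'(\Gamma-\epsilon)=\kappa'(\Gamma)-1$ reduces to tracking how the minimum degree behaves when an edge is deleted.

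For the sufficiency direction, assume $x$ is the unique dominating vertex and $\Gamma-\{x\}$ is $d$-regular; non-completeness forces $d+1\le n-2$, so $\delta(\Gamma)=d+1=\kappa'(\Gamma)$. For any edge $\epsilon$ I would check that $\delta(\Gamma-\epsilon)=d$: if $\epsilon=uv$ with $u,v\ne x$ then both endpoints drop to degree $d$ while $x$ remains dominating, so the diameter proposition gives $\kappa'(\Gamma-\epsilon)=d$; if $\epsilon=xv$ then only $v$ falls to degree $d$ (note $\deg(x)-1=n-2\ge d$), and the sandwich $d=\kappa'(\Gamma)-1\le\kappa'(\Gamma-\epsilon)\le\delta(\Gamma-\epsilon)=d$ closes the case.

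For the necessity direction, assume $\Gamma$ is minimally edge connected. First I would verify that $x$ is the unique dominating vertex: if some $y\ne x$ were also dominating, removing $xy$ lowers $\deg(x)$ and $\deg(y)$ to $n-2\ge\delta(\Gamma)$, leaves every other degree untouched, and preserves diameter $\le 2$ (any pair is joined through a common neighbour in $V\setminus\{x,y\}$), so $\kappa'(\Gamma-xy)=\delta(\Gamma)=\kappa'(\Gamma)$, contradicting minimality. Next, supposing $\Gamma-\{x\}$ is not regular I would pick $v\ne x$ with $\deg_\Gamma(v)>\delta(\Gamma)$ and delete $\epsilon=xv$; once again $\delta(\Gamma-xv)=\delta(\Gamma)$, so reaching the contradiction amounts to showing $\kappa'(\Gamma-xv)\ge\delta(\Gamma)$.

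The main obstacle is precisely this last inequality, because deleting $xv$ can destroy diameter $2$ and the diameter proposition is no longer available. My plan is to argue by a direct edge-cut count: for any partition $(A,B)$ of $V$ with $x\in A$ and cut $F=E_{\Gamma-xv}(A,B)$, the vertex $x$ already contributes $|B|$ or $|B|-1$ crossing edges (depending on whether $v\in A$ or $v\in B$), while the handshake identity applied to $B$ yields
\[
|F|=\sum_{u\in B}\deg_{\Gamma-xv}(u)-2|E_{\Gamma-xv}(B)|\ge|B|\bigl(\delta(\Gamma)-|B|+1\bigr),
\]
a quantity that is at least $\delta(\Gamma)$ for every $1\le|B|\le\delta(\Gamma)$; in the complementary range $|B|>\delta(\Gamma)$ the contribution of $x$ alone already forces $|F|\ge\delta(\Gamma)$. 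This gives $\kappa'(\Gamma-xv)\ge\delta(\Gamma)$, hence $\kappa'(\Gamma-xv)=\delta(\Gamma)=\kappa'(\Gamma)$, the desired contradiction that completes the proof.
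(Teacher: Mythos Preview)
The paper does not supply its own proof of this proposition: it is quoted verbatim as \cite[Theorem 2.1]{main paper}, so there is nothing in the present manuscript to compare your argument against. That said, your proposal is a correct self-contained proof.

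Your sufficiency direction is clean: the dominating vertex forces $\operatorname{diam}(\Gamma)\le 2$, Proposition~2.1 gives $\kappa'(\Gamma)=\delta(\Gamma)=d+1$, and for either type of edge you correctly pin $\kappa'(\Gamma-\epsilon)$ to $d$ via the sandwich $\kappa'(\Gamma)-1\le\kappa'(\Gamma-\epsilon)\le\delta(\Gamma-\epsilon)$.

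For necessity, the uniqueness step is fine (after deleting $xy$ any third vertex serves as a common neighbour, so diameter $\le 2$ persists and Proposition~2.1 applies again). The regularity step is where the real work lies, and your direct edge-cut estimate handles it: with $\delta=\delta(\Gamma)=\delta(\Gamma-xv)$ one has, for any bipartition $(A,B)$ with $x\in A$,
\[
|F|\;\ge\;\sum_{u\in B}\deg_{\Gamma-xv}(u)-2\binom{|B|}{2}\;\ge\;|B|\bigl(\delta-|B|+1\bigr),
\]
and the factorisation $(|B|-1)(|B|-\delta)\le 0$ shows this is at least $\delta$ whenever $1\le|B|\le\delta$; for $|B|\ge\delta+1$ the $|B|-1\ge\delta$ edges emanating from $x$ alone suffice. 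Hence $\kappa'(\Gamma-xv)=\delta=\kappa'(\Gamma)$, contradicting minimal edge connectedness. All the small side conditions you need ($n\ge 3$, $\delta\le n-2$ once $x$ is the unique dominating vertex, $v\sim x$) are indeed available, so the argument goes through.
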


\begin{proposition}
	\cite{order sum graph}	If $G$ is non-cyclic group, then order-sum graph $\Gamma_{OS}(G)$ is null graph.
\end{proposition}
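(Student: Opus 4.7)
The plan is to prove that when $G$ is non-cyclic, every element of $G$ has order at most $|G|/2$, from which the edge condition $o(x)+o(y) > |G|$ can never be met.

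First, I would invoke Lagrange's theorem: for every $x \in G$, the order $o(x)$ divides $|G|$. Since $G$ is non-cyclic, no element can attain order $|G|$ (otherwise $G$ would be generated by that element, contradicting the hypothesis). Therefore $o(x)$ is a proper divisor of $|G|$, and the largest such proper divisor is at most $|G|/2$. This gives the uniform bound $o(x) \le |G|/2$ for all $x \in G$.

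From this bound, for any two vertices $x, y \in G$ of $\Gamma_{OS}(G)$ one has
\[
o(x) + o(y) \le \frac{|G|}{2} + \frac{|G|}{2} = |G|,
\]
so the strict inequality $o(x) + o(y) > |G|$ required for adjacency never holds. Hence $E(\Gamma_{OS}(G)) = \emptyset$, and $\Gamma_{OS}(G)$ is the null graph on $|G|$ vertices.

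There is essentially no obstacle here; the only subtle point to be careful about is that the adjacency condition uses strict inequality, so the extremal case $o(x) + o(y) = |G|$ (which could occur, for example, when $G \cong \mathbb{Z}_2 \times \mathbb{Z}_2$ where two distinct non-identity elements each have order $2$) still fails to produce an edge. Beyond that, the argument is a one-line consequence of Lagrange's theorem together with the characterization of cyclic groups as those possessing an element of order equal to the group order.
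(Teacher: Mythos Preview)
Your argument is correct: Lagrange's theorem forces every element order to be a proper divisor of $|G|$ when $G$ is non-cyclic, hence at most $|G|/2$, and the strict adjacency inequality $o(x)+o(y)>|G|$ can never hold. The paper itself does not prove this proposition but simply cites it from \cite{order sum graph}; your proof is the natural one and almost certainly coincides with the original.
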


\begin{theorem} \label{thm order sum graph  minimal edge}
	Let $G$ be a finite cyclic group. Then the order-sum graph $\Gamma_{OS}(G)$ is minimally edge connected if and only if $G$ is the group of order a prime.
\end{theorem}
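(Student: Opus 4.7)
The plan is to prove the two directions separately, leaning heavily on Lemma~\ref{lemma 3.4}, Lemma~\ref{lemma 3.1}, and Proposition~\ref{minimal edge pro}.

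For the ``only if'' direction I would invoke Lemma~\ref{lemma 3.4}: if $|G|$ is prime then $\Gamma_{OS}(G)$ is complete, and Lemma~\ref{lemma 3.1} then immediately gives minimal edge connectivity. This direction is essentially free.

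For the ``if'' direction I would prove the contrapositive. Let $G$ be cyclic of order $n$ with $n$ not prime; I want to show that $\Gamma_{OS}(G)$ fails to be minimally edge connected. Setting aside the trivial case $n=1$ (where the graph is a single vertex), assume $n$ is composite, so in particular $n\geq 4$ and $\phi(n)\geq 2$. The first observation is that every generator $g$ of $G$ is a dominating vertex of $\Gamma_{OS}(G)$: since $o(g)=n$, we have $o(g)+o(y)=n+o(y)>n$ for every $y\in G\setminus\{g\}$. Hence $\Gamma_{OS}(G)$ contains at least $\phi(n)\geq 2$ dominating vertices. The second observation is that $\Gamma_{OS}(G)$ is not complete: the identity $e$ has $o(e)=1$, so $e$ is adjacent to $y$ iff $o(y)=n$, and because $n$ is composite there exists a non-generator (for example, any element whose order is a prime divisor of $n$), to which $e$ is not adjacent.

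These two observations put us squarely in the hypothesis of Proposition~\ref{minimal edge pro}: $\Gamma_{OS}(G)$ is non-complete and has a dominating vertex, so it is minimally edge connected if and only if this dominating vertex is unique. Since we have exhibited at least two dominating vertices, the criterion fails and $\Gamma_{OS}(G)$ is not minimally edge connected, completing the contrapositive.

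I do not anticipate a substantive obstacle here; the argument is essentially bookkeeping once one notices that generators automatically dominate. The only subtle point is confirming non-completeness so that Proposition~\ref{minimal edge pro} actually applies, but this is handled by looking at the identity together with any element of prime order dividing $n$.
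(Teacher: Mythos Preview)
Your argument is correct and follows essentially the same route as the paper: both directions use Lemma~\ref{lemma 3.4} and Lemma~\ref{lemma 3.1} for the easy implication, and for the other implication both observe that the $\phi(n)$ generators are all dominating vertices, so Proposition~\ref{minimal edge pro} forces $\phi(n)=1$, a contradiction for composite $n$. One small slip: you have the ``if'' and ``only if'' labels interchanged (the implication ``$|G|$ prime $\Rightarrow$ minimally edge connected'' is the \emph{if} direction), though the mathematics is unaffected.
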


\begin{proof}
	If $G$ is of order prime, then by Lemma \ref{lemma 3.4}, $\Gamma_{OS}(G)$ is complete and hence minimal edge connected (by Lemma \ref{lemma 3.1}). Now, suppose $G$ is a cyclic group with non-prime order. Then $\Gamma_C(G)$ is non-complete graph.
	Note that $x \in G$ is the dominating vertex if $o(x)=o(G)$. By Proposition \ref*{minimal edge pro}, $x$ should be the only dominating vertex. This implies that $\phi(n)=1$. Thus $|G| \leq 2$. This is a contradiction.		
\end{proof}

\begin{theorem} \label{thm non inverse minimal edge}
	Let $G$ be a finite group. Then the non-inverse graph $\Gamma_{NI}(G)$ is minimally edge connected if and only if all the non-identity elements are either self-inverse or not self-inverse.
\end{theorem}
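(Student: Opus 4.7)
The plan is to apply Proposition \ref{minimal edge pro} after first identifying the dominating vertices of $\Gamma_{NI}(G)$. In this graph the only potential non-neighbour of a vertex $x$ is its inverse $x^{-1}$, so $x$ is dominating if and only if $x^{-1} = x$. Hence the set of dominating vertices of $\Gamma_{NI}(G)$ is exactly the set of self-inverse elements of $G$; in particular $e$ is always dominating.

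With this observation I would split into cases according to whether $\Gamma_{NI}(G)$ is complete. By Lemma \ref{lemma 3.5}, completeness is equivalent to every element of $G$ being self-inverse, and then minimal edge connectedness is immediate from Lemma \ref{lemma 3.1}. Otherwise the graph is non-complete and Proposition \ref{minimal edge pro} applies: minimal edge connectedness becomes equivalent to (i) $e$ being the unique dominating vertex, i.e.\ no non-identity element being self-inverse, together with (ii) $\Gamma_{NI}(G) - \{e\}$ being regular.

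For the forward direction of the theorem, assuming no non-identity element is self-inverse, I would observe that for each $x \in G \setminus \{e\}$ the inverse $x^{-1}$ lies in $G \setminus \{e\}$ and is the unique non-neighbour of $x$ in $\Gamma_{NI}(G)-\{e\}$, so this subgraph is $(|G|-3)$-regular, and Proposition \ref{minimal edge pro} yields minimal edge connectedness. The backward direction follows from the dichotomy above: if $\Gamma_{NI}(G)$ is minimally edge connected, then either the graph is complete (giving all elements self-inverse, in particular all non-identity elements) or uniqueness of the dominating vertex forces $e$ to be the only self-inverse element, so no non-identity element is self-inverse.

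I do not foresee a serious obstacle. The identification of dominating vertices with self-inverse elements is the only structural point; after that the regularity of $\Gamma_{NI}(G) - \{e\}$ is automatic because each non-identity vertex has exactly one non-neighbour in it. The only real subtlety is remembering to treat the complete case separately via Lemma \ref{lemma 3.1}, since Proposition \ref{minimal edge pro} presumes non-completeness.
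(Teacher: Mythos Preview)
Your proposal is correct and follows the same strategy as the paper, relying on Lemma~\ref{lemma 3.5}, Lemma~\ref{lemma 3.1}, and Proposition~\ref{minimal edge pro}. Your treatment of the case where no non-identity element is self-inverse is in fact more accurate than the paper's: the paper asserts that $\Gamma_{NI}(G)$ is then a star graph (which fails for $|G|\ge 5$), whereas you correctly verify that $\Gamma_{NI}(G)-\{e\}$ is $(|G|-3)$-regular and conclude via Proposition~\ref{minimal edge pro}.
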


\begin{proof}
	If $G$ is group with all the non-identity elements are self-inverse , then by Lemma \ref{lemma 3.5}, $\Gamma_{NI}(G)$ is a complete graph and hence minimal edge connected (by Lemma \ref{lemma 3.1}).  If all the non-identity elements are not self-inverse, then $\Gamma_{NI}(G)$ is a star graph, hence minimal edge connected. Let $x$ be a non-identity self inverse element and $y$ be a non-identity  not self inverse element. Then  $\Gamma_{NI}(G)$ is non-complete graph and $ deg(x) \neq deg(y)$. By Proposition \ref*{minimal edge pro}, it is not minimally edge connected. It completes the proof.
\end{proof}

	\begin{theorem} \label{thm commuting minimal edge}
	Let $G$ be a finite group. Then the commuting graph $\Gamma_C(G)$ is minimally edge connected if and only if $G$ is abelian.
\end{theorem}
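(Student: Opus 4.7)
The plan is to split the biconditional into its two directions. The forward direction is immediate from the earlier lemmas: if $G$ is abelian, then by Lemma \ref{lemma 3.2} the graph $\Gamma_C(G)$ is complete, and Lemma \ref{lemma 3.1} then gives minimal edge connectedness. So all the substance lies in the contrapositive: assuming $G$ is non-abelian, I would show that $\Gamma_C(G)$ fails to be minimally edge connected.

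I would begin by noting two structural facts. First, Lemma \ref{lemma 3.2} tells us $\Gamma_C(G)$ is non-complete. Second, an element is a dominating vertex of $\Gamma_C(G)$ if and only if it commutes with every element of $G$, i.e.\ lies in $Z(G)$; in particular $e$ is always a dominating vertex, so Proposition \ref{minimal edge pro} applies. The argument then splits into two cases according to the size of $Z(G)$. If $|Z(G)| \geq 2$, then $\Gamma_C(G)$ has more than one dominating vertex, and Proposition \ref{minimal edge pro} directly rules out minimal edge connectedness.

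The hard case is $Z(G) = \{e\}$. Here Proposition \ref{minimal edge pro} requires $\Gamma_C(G) - \{e\}$ to be a regular graph. Since the neighbours of a non-identity $x$ in $\Gamma_C(G)$ are exactly $C_G(x)\setminus\{x\}$, the degree of $x$ in $\Gamma_C(G)-\{e\}$ equals $|C_G(x)|-2$, so regularity forces $|C_G(x)|$ to equal a common constant $k$ for every non-identity $x$. I plan to derive a contradiction by feeding this into the class equation
\[
|G| \;=\; 1 + \sum_{i=1}^{r}[G : C_G(x_i)] \;=\; 1 + r\cdot\frac{|G|}{k},
\]
where $x_1,\dots,x_r$ represent the non-central conjugacy classes (with $r\geq 1$ since $G$ is non-abelian). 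Rearranging yields $|G|(k-r)=k$, and combined with the elementary bound $k=|C_G(x)|\leq |G|$ this forces $k-r=1$ and hence $|G|=k$; but then every non-identity element would satisfy $C_G(x)=G$, i.e.\ lie in $Z(G)$, contradicting $Z(G)=\{e\}$. The main obstacle is precisely this trivial-center case: the other direction and the case $|Z(G)|\geq 2$ fall out immediately from the completeness criterion and Proposition \ref{minimal edge pro}, whereas ruling out a non-abelian group with trivial center whose non-central centralizers all share one size genuinely requires the class-equation counting above.
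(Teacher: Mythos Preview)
Your proposal is correct and follows essentially the same route as the paper: both invoke Proposition~\ref{minimal edge pro} to reduce to trivial center with all non-central centralizers of a common size~$k$, and then derive a contradiction from the class equation. Your version is in fact more explicit in the final step, actually carrying out the divisibility argument (from $|G|(k-r)=k$ and $k\le |G|$) where the paper simply writes the equation $\frac{|G|-1}{|G|}=\frac{m}{k}$ and asserts it ``is not possible.''
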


\begin{proof}
	If $G$ is abelian, then by Lemma \ref{lemma 3.2}, $\Gamma_C(G)$ is complete and hence minimal edge connected (by Lemma \ref{lemma 3.1}). Now, suppose $G$ is non-abelian. Then $\Gamma_C(G)$ is non-complete graph. From Proposition \ref{minimal edge pro}, the center of the group $G$ should consists only identity element. For $\Gamma_C(G)-\{e\}$ to be regular, the size of centralizer of all the non-central elements has to be equal. Let there be $m$ centralizers of non-central elements each of size $k$. Now, class equation of group $G$,  $$|G|= |Z(G)| + \sum_{x \in G \backslash Z(G)} \frac{|G|}{|C_G(x)|},$$
	implies that
	$$ \frac{|G|-1}{|G|}= \frac{m}{k},$$ which is not possible. It completes the proof.
\end{proof}

\begin{theorem} \label{thm commuting minimal connected}
	Let $G$ be a finite group. Then the commuting graph $\Gamma_{C}(G)$ is minimally connected if and only if $G$ is abelian.
\end{theorem}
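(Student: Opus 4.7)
The forward direction is immediate: if $G$ is abelian then Lemma~\ref{lemma 3.2} gives $\Gamma_C(G)$ complete, which is minimally connected by Lemma~\ref{lemma 3.1}. For the converse, I would mirror the strategy of Theorem~\ref{thm commuting minimal edge}: establish a vertex-connectivity analogue of Proposition~\ref{minimal edge pro} and then reuse the same class-equation contradiction.

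The key intermediate claim to prove is the following analogue: if $\Gamma$ is a non-complete graph with a dominating vertex $x$ and is minimally connected, then $x$ is the only dominating vertex and $\Gamma - \{x\}$ is regular. For the uniqueness half, assume a second dominating vertex $y$ and examine $\epsilon = \{x, y\}$. Every vertex other than $x, y$ is a common neighbor of both, so Menger's theorem produces $|V|-2$ internally disjoint $(x,y)$-paths of length two in $\Gamma - \epsilon$, forcing any cut of $\Gamma - \epsilon$ that separates $x$ from $y$ to have size at least $|V|-2$; since $\Gamma$ is non-complete, $\kappa(\Gamma) \leq \delta(\Gamma) \leq |V|-2$. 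Any cut of $\Gamma - \epsilon$ containing $x$ or $y$ is already a cut of $\Gamma$, so has size at least $\kappa(\Gamma)$. Combining these cases gives $\kappa(\Gamma - \epsilon) \geq \kappa(\Gamma)$, contradicting minimal connectedness.

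For the regularity half, suppose $\Gamma - \{x\}$ is not regular. Because degrees in $\Gamma$ and in $\Gamma - \{x\}$ differ by exactly $1$ for non-$x$ vertices, there is some $v \neq x$ with $\deg_\Gamma(v) > \delta(\Gamma)$. Take $\epsilon = \{x, v\}$; trivially $\kappa(\Gamma - \epsilon) \leq \kappa(\Gamma)$. To rule out a drop, suppose $T$ were a cut of $\Gamma - \epsilon$ with $|T| = \kappa(\Gamma) - 1$. If $x \in T$ or $v \in T$ then $T$ is already a cut of $\Gamma$, impossible. If $x, v \notin T$, then $\epsilon$ must be a bridge in $\Gamma - T$; since $x$ dominates $\Gamma$, any $w \in N_\Gamma(v) \setminus \{x\}$ lying outside $T$ would produce a path $v - w - x$ in $(\Gamma - \epsilon) - T$, so $N_\Gamma(v) \setminus \{x\} \subseteq T$. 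This forces $|T| \geq \deg_\Gamma(v) - 1 \geq \delta(\Gamma) \geq \kappa(\Gamma)$, contradicting $|T| = \kappa(\Gamma) - 1$. Therefore $\kappa(\Gamma - \epsilon) = \kappa(\Gamma)$, contradicting minimal connectedness.

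With the analogue in hand, the proof completes exactly as in Theorem~\ref{thm commuting minimal edge}. If $G$ is non-abelian, $\Gamma_C(G)$ is non-complete with $e$ dominating, so the analogue forces $Z(G) = \{e\}$ and $|C_G(x)| = k$ for a common value $k$ and every $x \in G \setminus \{e\}$. The class equation then yields $(|G|-1)/|G| = m/k$, where $m$ is the number of non-central conjugacy classes, which is impossible since $\gcd(|G|, |G|-1) = 1$. The main obstacle I foresee is the regularity half of the analogue: ruling out a $(\kappa - 1)$-cut of $\Gamma - \{x, v\}$ requires carefully showing that such a cut would have to isolate $v$ and thereby contain all of $N_\Gamma(v) \setminus \{x\}$. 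Once that lemma is established, the rest of the argument runs parallel to Theorem~\ref{thm commuting minimal edge} almost verbatim.
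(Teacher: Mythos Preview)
Your argument is correct but follows a genuinely different route from the paper. The paper's primary proof is a direct cut-set analysis: it picks an element $a$ of order at least $3$, deletes the edge $\epsilon=\{a,a^{-1}\}$, takes a minimum cut $P$ of $\Gamma_C(G)-\epsilon$, and then reroutes any path through $\epsilon$ using the symmetry $c\sim a \Leftrightarrow c\sim a^{-1}$ to derive a contradiction; no regularity statement and no class equation are used. (The paper also sketches an alternate reduction to Theorem~\ref{thm commuting minimal edge} via the claim that minimally connected graphs are trees, but that claim is false for complete graphs and the reduction is not sound as written.) Your approach is more structural: you isolate and prove a vertex-connectivity analogue of Proposition~\ref{minimal edge pro}, and then recycle the class-equation contradiction from Theorem~\ref{thm commuting minimal edge} verbatim. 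The payoff of your route is a reusable lemma that would immediately handle any graph with a dominating vertex (so it could in principle streamline Theorems~\ref{cor order-sum minimal connected}, \ref{cor non-inverse minimal connected}, and \ref{thm coprime minimal connected} as well), at the cost of proving that lemma. The paper's direct rerouting argument is shorter for this single theorem because it exploits the specific symmetry of the commuting relation rather than establishing a general regularity conclusion. One small point worth tightening in your write-up: in the uniqueness half you should state explicitly that a cut $T$ of $\Gamma-\epsilon$ with $x,y\notin T$ must contain every vertex of $V\setminus\{x,y\}$ (else $x$--$w$--$y$ reconnects), which is the step giving $|T|\ge |V|-2$.
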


\begin{proof}
	Assume that $\Gamma_{C}(G)$ has minimal connectivity. Let $G$ be a non-abelian group. Then there exists an element $a$ such that $o(a) \geq 3$. Suppose $\epsilon=\{aa^{-1}\}$, that is $a \sim a^{-1}$. Let $P$ be a vertex cut-set of minimal size in the graph $\Gamma_{C}(G)-\epsilon$. Consequently, $\kappa(\Gamma_{C}(G)-\epsilon) = |P|$. Since $\Gamma_{C}(G)$ is minimally connected, we get $\kappa(\Gamma_{C}(G))= |P|+1$.  Thus, there are two elements $b_1$ and $b_2$ of $G$ in $(\Gamma_{C}(G)-\epsilon)-P$ such that there is no path connecting them. We have two cases:\\
	Case-1: For at least one $i$, $b_i \notin \{ a, a^{-1}\}$. The graph $\kappa(\Gamma_{C}(G))-P$ is connected since $\kappa(\Gamma_{C}(G))= |P|+1$. Consequently, a path $P_1: b_1 \sim a_1 \sim a_2 \sim \ldots \sim a_k \sim a \sim a^{-1} \sim \ldots \sim b_2$ (including the edge $\epsilon$) between $b_1$ and $b_2$ exists in $\kappa(\Gamma_{C}(G))-P$. Since $a_k$ commutes with $a$, $a_k$ commutes with $a^{-1}$, it implies that $a_k \sim a^{-1}$. In $(\Gamma_{C}(G)-\epsilon)-P$, we have an alternative path $P_2: b_1 \sim a_1 \sim a_2 \sim \ldots \sim a_k \sim a^{-1} \sim \ldots \sim a_t \sim b_2$, which is not possible.\\
	Case-2: For $1 \leq i \leq 2$, $b_i \in \{a,a^{-1}\}$. First assume that $G = P \cup \{a, a^{-1}\}$. Then $|P| = |G|- 2$. Consequently, $\kappa(\Gamma_{C}(G))=|G|-1$. As a result, $\Gamma_{C}(G)$ is complete. This implies that $G$ is abelian. That is a contradiction. Now we can assume that $G \ne P \cup \{a, a^{-1}\}$. If $a \sim b$, then $b \sim a^{-1}$ for some $b \in G \backslash P$. This implies that there is a path between $a$ and $a^{-1}$. This is a contradiction.  Hence $a$ and $a^{-1}$ are isolated vertices in $(\Gamma_{C}(G)-\epsilon)-S$ in this instance. Therefore, $a$ and $a^{-1}$ are not connected to others in $\Gamma_{C}(G)-P$. Hence, the graph $\Gamma_{C}(G)-P$ is disconnected. This implies that $\kappa(\Gamma_{C}(G)) \leq |P|$, which leads to a contradiction.
	Hence, $G$ is abelian. \\
	Conversely, if $G$ is abelian, then by Lemma \ref{lemma 3.2}, $\Gamma_{C}(G)$ is complete graph and hence minimally connected.
\end{proof}

\noindent Alternate proof: A graph $\Gamma$ is minimally connected if and only if it is tree. Note that for a tree $\Gamma$, $\kappa'(\Gamma)=1$ because tree is an acyclic structure. Removing any one edge make it disconnected. Hence $\kappa'(\Gamma-\epsilon)=0$ for an edge $\epsilon$. Hence $\Gamma$ is minimally edge connected. Therefore, if $\Gamma$ is not minimally edge connected then it is also not a minimally connected.\\
If $G$ is not-abelian then $\Gamma_{C}(G)$ is not minimally edge connected. Hence it is also not minimally connected. Also, if $G$ is abelian then $\Gamma_{C}(G)$ is complete hence minimally connected.\\

\noindent By the similar argument as above, we can find when the order-sum graph graph $\Gamma_{OS}(G)$ and the non-inverse graph $\Gamma_{NI}(G)$ are minimally connected:

\begin{theorem} \label{cor order-sum minimal connected}
	Let $G$ be a finite group. Then the order-sum graph graph $\Gamma_{OS}(G)$ is minimally connected if and only if $G$ is a group of prime power order.
\end{theorem}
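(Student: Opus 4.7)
The plan is to adapt the alternate-proof template supplied after Theorem \ref{thm commuting minimal connected}: because every non-trivial minimally connected graph is a tree and trees are minimally edge connected, the contrapositive says that a graph which is not minimally edge connected cannot be minimally connected. Combining this implication with Theorem \ref{thm order sum graph  minimal edge} and with the proposition forcing $\Gamma_{OS}(G)$ to be null whenever $G$ is non-cyclic reduces the classification to analysing cyclic groups of prime-power order, which is where a direct structural argument will be needed.

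Concretely, if $G$ is non-cyclic then $\Gamma_{OS}(G)$ is null, hence disconnected for $|G|\ge 2$, so it is certainly not minimally connected. This forces $G$ to be cyclic, say $G=\mathbb{Z}_n$. A short computation shows the structure of $\Gamma_{OS}(\mathbb{Z}_n)$: the $\phi(n)$ generators form a clique $K_{\phi(n)}$, every non-generator is adjacent to every generator (since $n+d>n$ for every divisor $d\ge 1$ of $n$), and two non-generators are never adjacent, because any two proper divisors $d,d'$ of $n$ satisfy $d+d'\le n$. Using this picture together with Theorem \ref{thm order sum graph  minimal edge}, I would rule out every $n$ with two distinct prime divisors by locating an edge (inside the generator clique) whose removal leaves the minimum vertex separator unchanged; the obstruction is the existence of several independent non-generator vertices sharing a common neighbourhood equal to the entire generator set, so the cut "remove all generators" remains minimum after the edge deletion.

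For the converse I would verify, for $|G|=p^k$, that $\kappa(\Gamma_{OS}-\epsilon)=\kappa(\Gamma_{OS})-1$ for every edge $\epsilon$, splitting into two cases according to whether $\epsilon$ lies inside the clique of generators or joins a generator to a non-generator. The arithmetic of prime powers, in particular $\phi(p^k)=p^k-p^{k-1}$ and the presence of a unique prime dividing $|G|$, is what causes the minimum separator to shrink by exactly one in each subcase. The principal obstacle is this last verification: one needs to rule out the existence of smaller separators after edge removal, and the bookkeeping is delicate because the clique of generators and the independent set of non-generators have quite different sizes as $p$ and $k$ vary. I expect the generator-generator edge subcase to be the more subtle of the two, since the graph after removal remains dense and many candidate separators must be compared; Proposition \ref{minimal edge pro} will most likely be invoked at the key juncture to pin down the minimum separator explicitly.
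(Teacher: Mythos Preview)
Your plan cannot succeed because the theorem, as stated, is false. Take $G=\mathbb{Z}_4$: the two generators $1,3$ are dominating, while $0$ and $2$ are adjacent only to $1$ and $3$, so $\kappa(\Gamma_{OS}(\mathbb{Z}_4))=2$. Deleting the generator--generator edge $\{1,3\}$ leaves the $4$-cycle $0\sim1\sim2\sim3\sim0$, whose connectivity is still $2$. Thus $\Gamma_{OS}(\mathbb{Z}_4)$ is not minimally connected even though $|G|=2^{2}$ is a prime power. Your own first paragraph already manufactures a second family of counterexamples: any non-cyclic $p$-group has a null order-sum graph, which you correctly say is not minimally connected, yet such groups have prime-power order.

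The paper's ``proof'' is the single sentence ``by the similar argument as above,'' pointing to the alternate argument after Theorem~\ref{thm commuting minimal connected}. If one runs that template through Theorem~\ref{thm order sum graph  minimal edge}, the conclusion is that $\Gamma_{OS}(G)$ is minimally connected if and only if $G$ has \emph{prime} order, not prime-power order; ``prime power'' in the statement is almost certainly a slip. Your structural programme is therefore aimed at the wrong target: the forward direction must exclude every $n$ that is not prime (including $p^{k}$ with $k\ge 2$), and the converse collapses to the observation that prime order makes $\Gamma_{OS}(G)$ complete. The ``subtle generator--generator subcase'' you flagged is precisely where the verification fails, for the reason the $\mathbb{Z}_4$ example exhibits. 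As a side remark, the template's opening claim that a minimally connected graph must be a tree is itself false ($K_n$ is minimally connected for all $n$), so if you retain that step you will need an independent justification for the implication you actually use.
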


\begin{theorem} \label{cor non-inverse minimal connected}
	Let $G$ be a finite group. Then the non-inverse graph $\Gamma_{NI}(G)$ is minimally connected if and only if all the non-identity elements are either self-inverse or not self-inverse.
\end{theorem}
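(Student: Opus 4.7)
The plan is to reduce this theorem to Theorem \ref{thm non inverse minimal edge} in one direction via the earlier lemma asserting $\kappa(\Gamma_{NI}(G)) = \kappa'(\Gamma_{NI}(G))$, and in the other direction to display an explicit edge whose removal leaves the vertex connectivity unchanged.

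First I would record a general auxiliary fact: if $\Gamma$ is a simple graph with $\kappa(\Gamma) = \kappa'(\Gamma)$ and $\Gamma$ is minimally edge connected, then $\Gamma$ is minimally connected. For any edge $\epsilon$, the chain
$$\kappa(\Gamma) - 1 \le \kappa(\Gamma - \epsilon) \le \kappa'(\Gamma - \epsilon) = \kappa'(\Gamma) - 1 = \kappa(\Gamma) - 1$$
forces $\kappa(\Gamma - \epsilon) = \kappa(\Gamma) - 1$. Specialising to $\Gamma_{NI}(G)$ handles the ``if'' direction at once: when every non-identity element of $G$ is self-inverse, Lemmas \ref{lemma 3.5} and \ref{lemma 3.1} make $\Gamma_{NI}(G)$ complete and hence minimally connected; when every non-identity element is non-self-inverse, Theorem \ref{thm non inverse minimal edge} yields minimal edge connectivity, which the auxiliary fact promotes to minimal connectivity.

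For the ``only if'' direction I would argue contrapositively. Assume $G$ contains both a self-inverse non-identity element $z$ and a non-self-inverse element, and consider the edge $\epsilon = \{e, z\}$. The structural observation is that $\Gamma_{NI}(G) = K_{|G|} - M_0$, where $M_0 = \{\{a, a^{-1}\} : a \neq a^{-1}\}$ is the matching of inverse pairs; since $e$ and $z$ are both self-inverse, $M := M_0 \cup \{\{e, z\}\}$ is again a matching, and $\Gamma_{NI}(G) - \epsilon = K_{|G|} - M$. The decisive elementary fact is then $\kappa(K_n - M) = n - 2$ for any non-empty matching $M$: in $K_n - M$ any two non-adjacent (that is, matched) vertices $u, v$ are joined through each of the remaining $n - 2$ vertices $w$ by a path $u \sim w \sim v$ (as $w$ has at most one match in $M$, necessarily different from both $u$ and $v$), and these $n-2$ internally disjoint paths force $\kappa \geq n - 2$, while $\kappa \le \delta = n-2$ is automatic. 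Consequently $\kappa(\Gamma_{NI}(G) - \epsilon) = |G| - 2 = \kappa(\Gamma_{NI}(G))$, so the edge $\epsilon$ witnesses the failure of minimal connectivity.

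The main obstacle is exactly this converse direction. The slick ``not minimally edge connected implies not minimally connected'' template used in the alternate proof of Theorem \ref{thm commuting minimal connected} does not literally generalise—a minimally connected graph need not be a tree, witness any complete graph or cycle—so one has to produce an explicit witnessing edge. The crucial idea is to pick $\epsilon$ between two self-inverse vertices, because then $\Gamma_{NI}(G) - \epsilon$ remains of the form ``complete minus a matching,'' a class for which the vertex connectivity is completely transparent.
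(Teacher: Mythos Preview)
Your argument is correct and in fact repairs a gap in the paper's own treatment. The paper gives no separate proof here, merely pointing to the ``alternate proof'' of Theorem~\ref{thm commuting minimal connected}, whose engine is the assertion that a graph is minimally connected if and only if it is a tree; as you rightly observe, that characterisation is false (any $K_n$ or $C_n$ with $n\ge 3$ is minimally connected but not a tree), so the paper's deduction ``not minimally edge connected $\Rightarrow$ not minimally connected'' is unjustified as written. Your route sidesteps this entirely. For the ``if'' direction the chain $\kappa(\Gamma)-1 \le \kappa(\Gamma-\epsilon) \le \kappa'(\Gamma-\epsilon) = \kappa'(\Gamma)-1 = \kappa(\Gamma)-1$ needs only the standard fact that deleting one edge lowers $\kappa$ by at most one, together with the cited equality $\kappa(\Gamma_{NI})=\kappa'(\Gamma_{NI})$ and Theorem~\ref{thm non inverse minimal edge}. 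For the ``only if'' direction your structural description $\Gamma_{NI}(G)=K_n-M_0$ with $M_0$ a matching, plus the observation that deleting the edge $\{e,z\}$ between two self-inverse vertices again yields $K_n$ minus a non-empty matching (hence still with $\kappa=n-2$), supplies exactly the explicit witnessing edge that the paper's template cannot. The computation $\kappa(K_n-M)=n-2$ is sound: the only non-adjacent pairs in $K_n-M$ are the matched ones, and each such pair is joined by $n-2$ internally disjoint length-two paths through the remaining vertices, so Menger gives $\kappa\ge n-2$ while $\delta=n-2$ gives the reverse inequality.
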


\begin{proposition}{\label{sg full exponent}}
	Let $G$ be a finite group of full exponent. Then the co-prime graph $\Gamma_{CP}(G)$  is minimally edge connected if and only if $G$ is a $p$-group.
\end{proposition}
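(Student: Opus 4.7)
The plan is to apply Proposition~\ref{minimal edge pro} in both directions after noting that $e$ is always a dominating vertex of $\Gamma_{CP}(G)$, since $\gcd(1,n)=1$ for every positive integer $n$.

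For the sufficiency direction, if $G$ is a $p$-group then every non-identity element has order a positive power of $p$, so the gcd of the orders of any two non-identity elements is at least $p$, and such elements are non-adjacent. Hence $\Gamma_{CP}(G)$ is a star with centre $e$, and Lemma~\ref{lemma 3.1} delivers minimal edge connectivity; equivalently, $\Gamma_{CP}(G)-\{e\}$ is the $0$-regular empty graph, so Proposition~\ref{minimal edge pro} applies directly.

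For the necessity direction I would argue contrapositively: assume $G$ has full exponent and is not a $p$-group, and show $\Gamma_{CP}(G)$ fails to be minimally edge connected. Since $|G|$ has at least two prime divisors, $|G|\geq 6$, so by Lemma~\ref{lemma completeness coprimes} the graph is non-complete and Proposition~\ref{minimal edge pro} is applicable. Pick $g\in G$ with $o(g)=\exp(G)$; by Cauchy's theorem every prime dividing $|G|$ divides $\exp(G)$, hence divides $o(g)$. Consequently $\gcd(o(g),o(y))>1$ for every non-identity $y$, so $g$ has degree $0$ in $\Gamma_{CP}(G)-\{e\}$. On the other hand, choosing distinct primes $p,q$ dividing $|G|$ and elements $x_p,x_q$ of orders $p,q$ respectively (Cauchy again) yields $x_p\sim x_q$, so $x_p$ has positive degree in $\Gamma_{CP}(G)-\{e\}$. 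Thus $\Gamma_{CP}(G)-\{e\}$ is not regular, and Proposition~\ref{minimal edge pro} gives the conclusion.

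The only conceptual point to flag, and the sole place where the full-exponent hypothesis is used, is the existence of an element $g$ whose order is divisible by every prime dividing $|G|$; this is exactly what forces $g$ to be non-adjacent to every other non-identity vertex and supplies the asymmetry in degrees inside $\Gamma_{CP}(G)-\{e\}$. Without that hypothesis one would need a more delicate degree comparison among elements whose orders use different subsets of the prime divisors of $|G|$, which explains why the proposition as stated is restricted to groups of full exponent. Apart from this observation, the argument is a routine verification of the hypotheses of Proposition~\ref{minimal edge pro}.
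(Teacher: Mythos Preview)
Your argument is correct. The paper actually states Proposition~\ref{sg full exponent} without proof, so there is no author's proof to compare against; your use of Proposition~\ref{minimal edge pro} together with the observation that an element of order $\exp(G)$ is isolated in $\Gamma_{CP}(G)-\{e\}$ while an element of prime order is not is exactly the kind of routine verification the authors presumably had in mind.
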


The following theorem describes minimal edge-connectivity of the co-prime graph $\Gamma_{CP}(G)$ of the group $G$ of even order.

\begin{theorem}
	Let $G$ be a group of even order which is not a $p$-group. Then the co-prime graph $\Gamma_{CP}(G)$  is not minimally edge connected.
\end{theorem}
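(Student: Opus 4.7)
The plan is to apply Proposition \ref{minimal edge pro}. Because $|G|$ is even and not a $p$-group, $|G| \geq 6$, so $\Gamma_{CP}(G)$ is non-complete by Lemma \ref{lemma completeness coprimes}, and the identity $e$ is a dominating vertex (since $\gcd(o(e),o(y)) = 1$ for every $y$). By the proposition it is enough to exhibit two non-identity vertices with different degrees, which will violate the regularity of $\Gamma_{CP}(G) - \{e\}$. I will compare an element $a$ of order $2$ with an element $b$ of order $q$, where $q$ is an odd prime dividing $|G|$; both exist by Cauchy's theorem.

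Writing $T_n = \{y \in G : n \mid o(y)\}$, a direct unpacking of the coprime adjacency gives $\deg(a) = |G| - |T_2|$ and $\deg(b) = |G| - |T_q|$, so the task reduces to showing $|T_2| \neq |T_q|$. The key step, and the place where the real content lies, is a parity argument driven by the inversion map $\iota: y \mapsto y^{-1}$. Since $o(y) = o(y^{-1})$, $\iota$ preserves every $T_n$, and its fixed points are exactly the $y$ with $y^2 = e$, i.e.\ those with $o(y) \in \{1, 2\}$. On $T_q$ (with $q$ odd) there are no such fixed points, so $|T_q|$ is even. On $T_2$ the fixed points are precisely the $i_2$ elements of order $2$, whence $|T_2| \equiv i_2 \pmod{2}$; applying the same involution count to all of $G$ (whose fixed-point set has size $1 + i_2$) and using that $|G|$ is even forces $i_2$ odd, so $|T_2|$ is odd.

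Combining the two parities gives $|T_2| \neq |T_q|$, hence $\deg(a) \neq \deg(b)$, so $\Gamma_{CP}(G) - \{e\}$ is not regular, and Proposition \ref{minimal edge pro} concludes. The main obstacle I expect is spotting this clean uniform argument: a direct approach would naturally split into cases (whether $G$ contains an element of order $2q$, how many odd primes divide $|G|$, whether the group is EPPO, etc.) and each case would need its own counting or Frobenius-style input; the inversion/parity trick sidesteps all of that by reducing the problem to a single $\pmod 2$ comparison.
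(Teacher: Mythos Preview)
Your proof is correct and follows essentially the same route as the paper: apply Proposition~\ref{minimal edge pro} with the dominating vertex $e$, compare an element $a$ of order $2$ with an element $b$ of odd prime order, and use the inversion map together with the fact that a group of even order has an odd number of involutions to force $\deg(a)$ and $\deg(b)$ to have opposite parities. The only cosmetic difference is that the paper pairs up the \emph{neighbours} of $a$ and $b$ directly, whereas you pass to the complements $T_2$ and $T_q$; the underlying parity argument is identical.
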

\begin{proof}
	Let $G$ be a group of even order which is not a $p$-group. The only dominating vertex of $\Gamma_{CP}(G)$ is the identity element. Let $a,b \in G$ such that $o(a)=2$ and $o(b) = p$, where $p$ is an odd prime. The group $G$ has odd number of elements of order two. For $b_1 \in G$, if $a \sim b_1$ such that $o(b_1) > 2$ (but not even), then $a \sim b_1^{-1}$. Consequently, $deg(a)$ is even in $\Gamma_{CP}(G)\backslash \{e\}$. Now, if $ b \sim c$ for $c \neq e \in G$ such that $o(c) > 2$, then $b \sim c^{-1}$. Also $b \sim x$ (all the elements of order 2). Thus $deg(y)$ is odd in $\Gamma_{CP}(G)\backslash \{e\}$ . It follows that $\Gamma_{CP}(G)$  is not regular. By Proposition \ref{minimal edge pro}, $\Gamma_{CP}(G)$  is not minimally edge connected. 
\end{proof}


\begin{theorem} \label{thm coprime minimal connected} 
	Let $G$ be a finite group. Then the co-prime graph $\Gamma_{CP}(G)$ is minimally connected if and only if $G$ is $p$-group.
\end{theorem}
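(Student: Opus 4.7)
The plan is to mirror the two-part argument used for Theorem~\ref{thm commuting minimal connected}, exploiting that $e$ is a canonical dominating vertex of $\Gamma_{CP}(G)$.

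For the easy direction, suppose $G$ is a $p$-group. Then every non-identity element has order a power of $p$, so any two non-identity $x,y \in G$ satisfy $\gcd(o(x),o(y)) \geq p > 1$; i.e.\ no two non-identity vertices are adjacent. Since $o(e)=1$, the identity is adjacent to every other vertex, and hence $\Gamma_{CP}(G)$ is the star $K_{1,|G|-1}$, which is minimally connected by Lemma~\ref{lemma 3.1}.

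For the hard direction, assume $G$ is not a $p$-group. By Cauchy's theorem there are distinct primes $p,q$ dividing $|G|$ together with elements $a,b \in G$ of orders $p$ and $q$, so $\{e,a,b\}$ induces a triangle in $\Gamma_{CP}(G)$. I would split on whether $H := \Gamma_{CP}(G) - e$ is connected. If $H$ is disconnected, then $\kappa(\Gamma_{CP}(G)) = 1$, yet removing the edge $\{a,b\}$ leaves the graph connected via the path $a \sim e \sim b$; the vertex connectivity therefore stays at $1$, violating minimal connectivity. If $H$ is connected, set $k = \kappa(H)+1 = \kappa(\Gamma_{CP}(G))$. I would then locate a vertex $v \neq e$ whose degree in $H$ strictly exceeds $\kappa(H)$ and verify, by a direct case analysis of vertex cuts (analogous to the cone-construction $H+e$), that $\kappa(\Gamma_{CP}(G) - \{e,v\}) = k$, producing the desired non-critical edge.

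The main obstacle is the existence of such a $v$ in the hard direction, which is equivalent to $H$ being non-regular. When $G$ is an EPPO-group, $H$ is a complete multipartite graph whose parts are indexed by the primes dividing $|G|$; a Sylow-counting argument (in the spirit of Proposition~\ref{sg full exponent}) rules out all parts having the same cardinality when $G$ is not a $p$-group, so $H$ is non-regular. When $G$ is not EPPO, composite-order elements have strictly smaller $H$-degree than prime-order elements, again furnishing a non-minimum-degree vertex, or else such elements are isolated in $H$ and we return to the disconnected case handled above. Once such a $v$ is in hand, the vertex-cut casework on the "almost-cone" $\Gamma_{CP}(G) - \{e,v\}$ proceeds along the same lines as the author's proof of Theorem~\ref{thm commuting minimal connected}.
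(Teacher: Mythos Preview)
Your route is genuinely different from the paper's. The paper deletes the edge $\{a,b\}$ between two elements of distinct prime orders $p_1>p_2$ and, exactly as in its commuting-graph argument, reroutes any $b_1$--$b_2$ path that uses this edge through $a^{-1}$ (which has the same order $p_1\ge 3$, so $a^{-1}\neq a$ and $a^{-1}\sim b$); several residual subcases (three-prime divisors, non-EPPO, EPPO) are then disposed of one by one. You instead exploit the cone decomposition $\Gamma_{CP}(G)=H+e$ with $H=\Gamma_{CP}(G)-e$: when $H$ is disconnected, any edge of $H$ is non-critical because $e$ remains dominating; when $H$ is connected you pick $v$ with $\deg_H(v)>\kappa(H)$ and show that $\{e,v\}$ is non-critical by a short vertex-cut analysis (any cut of $\Gamma_{CP}(G)-\{e,v\}$ either contains $e$, giving a cut of $H$, or must contain all of $N_H(v)$). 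Your argument is more structural and, once the existence of $v$ is secured, cleaner than the paper's case-laden rerouting; the paper's trick, on the other hand, needs no preliminary analysis of $H$.

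Two points in your outline need tightening before it is a proof. First, ``equivalent to $H$ being non-regular'' is not literally correct: what you need is $\Delta(H)>\kappa(H)$, and non-regularity gives only the (sufficient) chain $\Delta(H)>\delta(H)\ge\kappa(H)$; in the EPPO case $H$ is complete multipartite so $\kappa(H)=\delta(H)$ and the equivalence happens to hold, but you should say so. Second, the EPPO ``Sylow-counting argument'' must actually be carried out, and Proposition~\ref{sg full exponent} does not supply it. The clean way is Frobenius's theorem: the number of $p_i$-elements (identity included) is a multiple of the full $p_i$-part $p_i^{a_i}$ of $|G|$, so if the $r\ge 2$ parts of the complete multipartite graph $H$ all had the same size $m$, then $m\equiv -1\pmod{p_i^{a_i}}$ for every $i$, whence $m\equiv -1\pmod{|G|}$ by the Chinese Remainder Theorem, contradicting $rm=|G|-1$. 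With these two items made explicit, your argument goes through.
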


\begin{proof}
	Assume that $\Gamma_{C}(G)$ has minimal connectivity. Let $G$ be not a $p$-group. Then there exist two elements $a$ and $b$ such that $o(a)=p_1$ and $o(b)=p_2$ where $p_1$ and $p_2$ are distinct primes with $ p_1 > p_2$ . Suppose $\epsilon=\{ab\}$. In the graph $\Gamma_{CP}(G)-\epsilon$, let $P$ be a vertex cut-set of minimal size. Consequently, $\kappa(\Gamma_{CP}(G)-\epsilon) = |P|$. Since $\Gamma_{CP}(G)$ is minimum connected, we get $\kappa(\Gamma_{CP}(G))= |P|+1$ . Thus, in $(\Gamma_{CP}(G)-\epsilon)-P$, there are two elements $b_1$ and $b_2$ of $G$ such that there is no path connecting them. We have two cases:\\
	Case-1: For at least one $i$, $b_i \notin \{ a, b\}$. The graph $\kappa(\Gamma_{C}(G))-P$ is connected since $\kappa(\Gamma_{C}(G))= |P|+1$. Consequently, a path $P_1: b_1 \sim a_1 \sim a_2 \sim \ldots \sim a_k \sim a \sim b \sim \ldots \sim b_2$ (including the edge $\epsilon$) between $b_1$ and $b_2$ exists in $\kappa(\Gamma_{C}(G))-P$. We obtain $a_k \sim a^{-1}$ since $a_k \sim a$. In $(\Gamma_{CP}(G)-\epsilon)-P$, we have an alternative path $P_2: b_1 \sim a_1 \sim a_2 \sim \ldots \sim a_k \sim a^{-1} \sim b \sim \ldots \sim a_t \sim b_2$, which is not possible.\\
	Case-2: For $1 \leq i \leq 2$, $b_i \in \{a, b\}$.  First assume that $G = P \cup \{a, b\}$. Then $|P| = |G|- 2$. Consequently, $\kappa(\Gamma_{CP}(G))=|G|-1$. As a result, $\Gamma_{CP}(G)$ is complete. Then by Lemma \ref{lemma completeness coprimes}, $|G| \leq2$. That is a contradiction. \\
	Now we can assume that $G \ne P \cup \{a, b\}$. If there exist an element $c \in G \backslash P$ such that $o(c)= p_3$, where $p_3$ is a prime distinct from $p_1$ and $p_2$. If $ c \sim a$, then $c \sim b$. That is not possible, otherwise there is a path between $a$ and $b$ in $(\Gamma_{CP}(G)-\epsilon)-P$, that is a contradiction.  Hence $G$ is divisible by only two primes.\\
	If $G$ is divisible by only two primes, then for non-EPPO-group there exist an element $w$ such that $o(w)=p_1 p_2$. Hence $w$ is isolated vertex in $\Gamma_{CP}(G) - P$, which makes $\Gamma_{CP}(G) - P$ disconnected, that is a contradiction. \\
	Now, for EPPO-group, if $\epsilon = \{ex\}$ where $x\in P$, then $\kappa(\Gamma_{CP}(G)) \neq \kappa(\Gamma_{CP}(G)-\epsilon)$. This implies that it is not minimally connected.
	Hence $G$ is $p$-group.\\
	Conversely, if $G$ is $p$-group, then $\Gamma_{CP}(G)$ is star graph and hence minimally connected (by Lemma \ref{lemma 3.1}). It completes the proof.
\end{proof}

	\section*{Declarations}
	
	\noindent \textbf{Acknowledgement}: The first author is supported by junior research fellowship of CSIR, India.\\
	
	\noindent\textbf{Conflicts of interest}: There is no conflict of interest regarding the publishing of this paper.
	
	
	\normalsize
	
\end{document}